\newtheorem{Theorem}{Theorem}[section]
\newtheorem{Lemma}[Theorem]{Lemma}
\newtheorem{Proposition}[Theorem]{Proposition}
\newtheorem{Remark}[Theorem]{Remark}
\newtheorem{Definition}[Theorem]{Definition}
\newtheorem{Notation}[Theorem]{Notation}
\newtheorem*{Theorem A}{Theorem A}
\newtheorem*{Theorem B}{Theorem B}
\newcommand*{\overbar}[1]{\mkern 1.5mu\overline{\mkern-1.5mu#1\mkern-1.5mu}\mkern 1.5mu}
\begin{document}
\author{Karin Baur}
\address{School of Mathematics, University of Leeds, Leeds, LS2 9JT, United Kingdom}
\address{On leave from the University of Graz}
\email{pmtkb@leeds.ac.uk}
\author{Charlie Beil} 
\address{Institut f\"ur Mathematik und Wissenschaftliches Rechnen, Universit\"at Graz, Heinrichstrasse 36, 8010 Graz, Austria.}
 \email{charles.beil@uni-graz.at}
\title[Examples of geodesic ghor algebras on hyperbolic surfaces]{Examples of geodesic ghor algebras\\ on hyperbolic surfaces}
 \keywords{Dimer algebra, hyperbolic surface, non-noetherian ring, noncommutative algebraic geometry.}
 \subjclass[2010]{13C15, 14A20}
 \date{}

\maketitle

\begin{abstract}
Cancellative dimer algebras on a torus have many nice algebraic and homological properties. 
However, these nice properties disappear for dimer algebras on higher genus surfaces. 
We consider a new class of quiver algebras on surfaces, called `geodesic ghor algebras', that reduce to cancellative dimer algebras on a torus, yet continue to have nice properties on higher genus surfaces. 
These algebras exhibit a rich interplay between their central geometry and the topology of the surface.
We show that (nontrivial) geodesic ghor algebras do in fact exist, and give explicit descriptions of their central geometry.
This article serves a companion to the article `A generalization of cancellative dimer aglebras to hyperbolic surfaces', where the main statement is proven. 
\end{abstract}

\section{Introduction}

Cancellative dimer algebras on a torus form a prominent class of noncommutative crepant resolutions and Calabi-Yau algebras, e.g., \cite{Br,D,B2}.  
In particular, they are homologically homogeneous endomorphism rings of modules over their centers, and their centers are $3$-dimensional toric Gorenstein coordinate rings.
These interesting properties vanish, however, once dimer algebras are placed on higher genus surfaces. 
For example, the center of a dimer algebra on a surface of genus $g \geq 2$ is simply the polynomial ring in one variable, and so there are no interesting interactions between the topology of the surface and the algebras central geometry. 

In this article we consider special quotients of dimer algebras, called `ghor algebras', whose central geometries are, in contrast, closely related to the topology of the surface. 
A ghor algebra is a quiver algebra whose quiver embeds in a surface, and with relations determined by the perfect matchings of its quiver (the precise definition is given in Section \ref{results}).
Ghor algebras were introduced in \cite{B1,B4} to study nonnoetherian dimer algebras on a torus.\footnote{Ghor algebras were originally called `homotopy algebras' (e.g., in \cite{B4}) because their relations are homotopy relations on the paths in the quiver when the surface is a torus.
However, in the higher genus case their relations also identify homologous cycles, and therefore the name `homotopy' is less suitable.  The word `ghor' is Klingon for surface.}
In Section \ref{results} we introduce a special property that certain ghor algebras possess, called `geodesic'.
On a torus, a ghor algebra is geodesic if and only if it is a cancellative dimer algebra (if and only if it is a noncommutative crepant resolution, if and only if it is noetherian \cite{D,B2}).  
On higher genus surfaces, geodesic ghor algebras remain endomorphism rings of modules over their centers, but new features arise.
The purpose of this article is to present explicit examples of ghor algebras that exhibit these new features.

In each example, we will describe the `cycle algebra', which is a commutative algebra formed from the cycles in the quiver. 
The cycle algebra and center coincide for geodesic ghor algebras on a torus, but may differ on higher genus surfaces.
However, the two commutative algebras remain closely related: if the center is nonnoetherian, then the cycle algebra is a depiction of the center. 
The cycle algebra thus plays a fundamental role in describing the geometry of the center when the center is nonnoetherian. 

We will also show that polynomial rings in at least three variables are geodesic ghor algebras.
The results stated in Section \ref{results} are proven in the companion article \cite{BB}.

\section{Ghor algebras: background and main results} \label{results}

\begin{Notation} \rm{
Throughout, $k$ is an uncountable algebraically closed field.
We denote by $\operatorname{Spec}S$ and $\operatorname{Max}S$ the prime ideal spectrum (or scheme) and maximal ideal spectrum (or affine variety) of $S$, respectively.
Given a quiver $Q$, we denote by $kQ$ the path algebra of $Q$; by $Q_{\ell}$ the paths of length $\ell$; by $\operatorname{t}, \operatorname{h}: Q_1 \to Q_0$ the tail and head maps; and by $e_i$ the idempotent at vertex $i \in Q_0$.
By \textit{cyclic subpath} of a path $p$, we mean a subpath of $p$ that is a nontrivial cycle.
}\end{Notation}

In this article will consider surfaces $\Sigma$ that are obtained from a regular $2N$-gon $P$, $N \geq 2$, by identifying the opposite sides of $P$.
This class of surfaces includes all smooth orientable compact closed connected genus $g \geq 1$ surfaces.
Specifically,
\begin{itemize}
 \item if $P$ is a $4g$-gon, then $\Sigma$ is a smooth genus $g$ surface; and 
 \item if $P$ is a $2(2g+1)$-gon, then $\Sigma$ is a genus $g$ surface with a pinched point.
\end{itemize}
The polygon $P$ is then a fundamental polygon for $\Sigma$.

If $N = 2$, then $\Sigma$ is a torus, and the covering space of $\Sigma$ is the plane $\mathbb{R}^2$.
For $N \geq 3$, the covering space of $\Sigma$ is the hyperbolic plane $\mathbb{H}^2$.
The hyperbolic plane may be represented by the interior of the unit disc in $\mathbb{R}^2$, where straight lines in $\mathbb{H}^2$ are segments of circles that meet the boundary of the disc orthogonally.
In the covering, the hyperbolic plane is tiled with regular $2N$-gons, with $2N$ such polygons meeting at each vertex.
In this case, $\Sigma$ is said to be a hyperbolic surface.

\begin{Definition} \label{definitions} \rm{ \ \\
\indent $\bullet$ A \textit{dimer quiver} on $\Sigma$ is a quiver $Q$ whose underlying graph $\overbar{Q}$ embeds in $\Sigma$, such that each connected component of $\Sigma \setminus \overbar{Q}$ is simply connected and bounded by an oriented cycle, called a \textit{unit cycle}.

\begin{itemize}
 \item[-] A \textit{perfect matching} of a dimer quiver $Q$ is a set of arrows $x \subset Q_1$ such that each unit cycle contains precisely one arrow in $x$.
Throughout, we assume that each arrow is contained in at least one perfect matching.
 \item[-] A perfect matching $x$ is called \textit{simple} if $Q \setminus x$ contains a cycle that passes through each vertex of $Q$ (equivalently, $Q \setminus x$ supports a simple $kQ$-module of dimension vector $(1,1, \ldots, 1)$).
\end{itemize}

We denote by $\mathcal{P}$ and $\mathcal{S}$ the set of perfect and simple matchings of $Q$, respectively.
We will consider the polynomial rings generated by these matchings, $k[\mathcal{P}]$ and $k[\mathcal{S}]$. 

$\bullet$ Denote by $e_{ij} \in M_n(k)$ the $n \times n$ matrix with a $1$ in the $ij$-th slot and zeros elsewhere.
Consider the two algebra homomorphisms
\begin{equation*} \label{eta}
\eta: kQ \to M_{|Q_0|}\left(k[\mathcal{P}]\right) \ \ \ \ \text{ and } \ \ \ \ \tau: kQ \to M_{|Q_0|}\left(k[\mathcal{S}]\right)
\end{equation*}
defined on the vertices $i \in Q_0$ and arrows $a \in Q_1$ by
$$\begin{array}{c} 
\eta(e_i) = e_{ii}, \ \ \ \ \ \ \ \ \eta(a) = e_{\operatorname{h}(a),\operatorname{t}(a)} \prod_{\substack{x \in \mathcal{P} :\\ x \ni a}} x,\\
\tau(e_i) = e_{ii}, \ \ \ \ \ \ \ \ \tau(a) = e_{\operatorname{h}(a),\operatorname{t}(a)} \prod_{\substack{x \in \mathcal{S} :\\ x \ni a}} x,
\end{array}$$
and extended multiplicatively and $k$-linearly to $kQ$.
We call the quotient 
$$A := kQ/\operatorname{ker}\eta$$
the \textit{ghor algebra} of $Q$.

$\bullet$ The \textit{dimer algebra} of $Q$ is the quotient of $kQ$ by the ideal 
$$I = \left\langle p - q \ | \ \exists a \in Q_1 \text{ such that } pa, qa \text{ are unit cycles} \right\rangle \subset kQ,$$
where $p,q$ are paths.
}\end{Definition}

A ghor algebra $A = kQ/\operatorname{ker}\eta$ is the quotient of the dimer algebra $kQ/I$ since $I \subseteq \operatorname{ker}\eta$: if $pa, qa$ are unit cycles with $a \in Q_1$, then
$$\eta(p) = e_{\operatorname{h}(p),\operatorname{t}(p)} \prod_{\substack{x \in \mathcal{P}: \\ x \not \ni a}} x = \eta(q).$$
Dimer algebras on non-torus surfaces have been considered in the context of, for example, cluster categories \cite{BKM,K}, Belyi maps \cite{BGH}, and gauge theories \cite{FGU,FH}.

\begin{Notation} \rm{
Let $\pi: \Sigma^+ \to \Sigma$ be the projection from the covering space $\Sigma^+$ (here, $\mathbb{R}^2$ or $\mathbb{H}^2$) to the surface $\Sigma$.
Denote by $Q^+ := \pi^{-1}(Q) \subset \Sigma^+$ the (infinite) covering quiver of $Q$.
}\end{Notation}

We introduce the following special class of ghor algebras that generalizes cancellative dimer algebras on a torus.
\newpage
\begin{Definition} \rm{ 
Given a cycle $c$, we define the \textit{class} of $c$ to be
$$[c] := \sum_{k \in [N]} (n_k - n_{k+N}) (\delta_{k \ell})_{\ell} \in \mathbb{Z}^N,$$ 
where for $k \in [2N]$, $c$ transversely intersects side $k$ of $P$ $n_k$ times.
If $\Sigma$ is smooth (that is, $N$ is even), then $[c]$ is the homology class of $c$ in $H_1(\Sigma) := H_1(\Sigma, \mathbb{Z})$. 
\begin{itemize}
 \item A cycle $p \in A$ is \textit{geodesic} if the lift to $Q^+$ of each cyclic permutation of each representative of $p$ does not have a cyclic subpath.  
 \item Two cycles are \textit{parallel} if they do not transversely intersect.
 \item A ghor algebra is \textit{geodesic} if for each $k \in [2N]$ there is a geodesic cycle $\gamma_k$ with class
$$[\gamma_k] =  (\delta_{k \ell} - \delta_{k+N,\ell})_{\ell \in [N]} \in \mathbb{Z}^N,$$
with indices modulo $2N$, and a set of pairwise parallel geodesic cycles $c_i \in e_i kQ e_i$, $i \in Q_0$, such that $c_{\operatorname{t}(\gamma_k)} = \gamma_k$.
\end{itemize}
}\end{Definition}

\begin{Proposition} \label{suffices} \cite[Corollary 3.15]{BB}
If a ghor algebra $A$ is geodesic, then 
$$A := kQ/\operatorname{ker}\eta \cong kQ/\operatorname{ker}\tau.$$
In particular, it suffices to only consider the simple matchings of $Q$ to determine the relations of $A$.
\end{Proposition}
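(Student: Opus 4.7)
The plan is to establish the stronger identification $\operatorname{ker}\eta = \operatorname{ker}\tau$, which immediately implies the claimed isomorphism.  One inclusion is essentially automatic: since every simple matching is in particular a perfect matching, the rule sending $x \in \mathcal{S}$ to itself and $x \in \mathcal{P}\setminus\mathcal{S}$ to $1$ extends to an algebra surjection $\phi\colon k[\mathcal{P}] \twoheadrightarrow k[\mathcal{S}]$, and applying $\phi$ entry-wise gives $\tau = M_{|Q_0|}(\phi)\circ\eta$, so $\operatorname{ker}\eta\subseteq\operatorname{ker}\tau$.  I would dispose of this direction in one line.

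The substance is the reverse inclusion.  Both morphisms respect the vertex-pair grading on $kQ$, so the claim reduces to the following: if parallel paths $p,q\in e_j(kQ)e_i$ use each simple matching the same number of times, then they use each perfect matching the same number of times.  My approach is to route this through the class map $[\,\cdot\,]$.  The key structural fact is that every unit cycle contains exactly one arrow from every perfect matching, so that two parallel paths in $Q$ whose lifts to the simply connected cover $Q^+$ share endpoints cobound a 2-chain of unit-cycle regions and therefore have identical perfect-matching counts.  For arbitrary parallel paths, the discrepancy $n_x(p)-n_x(q)$ in any matching $x\in\mathcal{P}$ consequently depends only on the class $[pq^{-1}]\in\mathbb{Z}^N$ of the loop $pq^{-1}$ on $\Sigma$.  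The task becomes: show that once every simple-matching discrepancy vanishes, the loop has trivial class, whence every perfect-matching discrepancy vanishes too.

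This is where the geodesic hypothesis enters and where the main obstacle lies.  The cycles $\gamma_k$, $k\in[2N]$, realize classes $\pm e_\ell$ generating $\mathbb{Z}^N$, and the pairwise parallel families $\{c_i\}_{i\in Q_0}$ supply geodesic representatives of each such class through every vertex, whose lifts to $Q^+$ contain no cyclic subpaths.  Using these as probes, I would test a candidate loop by concatenating $p$ and $q$ with suitable products of the $c_i$ at each vertex and comparing simple-matching totals; the absence of cyclic subpaths in the relevant lifts should force the concatenation to realize its class faithfully in the simple-matching pairing, so that simple matchings already separate classes.  The subtle step — and, I expect, the real work of the companion paper's proof — is verifying that no non-simple perfect matching carries information invisible to the simple ones under such probing; this amounts to a local analysis in $Q^+$ of how the geodesic lifts partition the faces of the universal cover, ruling out pathological compensations between distinct matchings.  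The geodesic cycles have to be strong enough to saturate the pairing against $\mathcal{P}$, and it is precisely the definition of \emph{geodesic} ghor algebra that furnishes this saturation.
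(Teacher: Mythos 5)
Your easy direction is fine: $\mathcal{S}\subseteq\mathcal{P}$ gives the specialization $k[\mathcal{P}]\to k[\mathcal{S}]$ and hence $\operatorname{ker}\eta\subseteq\operatorname{ker}\tau$, and the reduction of the hard direction to ``equal $\bar{\tau}$-monomials on parallel paths implies equal $\bar{\eta}$-monomials'' is the right formulation (the kernels are spanned by differences of parallel paths since $\eta,\tau$ send paths to monomial matrix entries). But the argument you build on it has a false intermediate claim. Two parallel paths whose lifts to $Q^+$ share endpoints do \emph{not} have identical perfect-matching counts: cobounding a $2$-chain of faces only yields $n_x(p)-n_x(q)=m$ for every $x\in\mathcal{P}$, where $m$ is the signed number of enclosed faces (each unit cycle meets each matching exactly once), as one sees by comparing a path $p$ with $p$ composed with a unit cycle. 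Consequently the discrepancy function is pinned down by the class of $pq^{-1}$ only up to a uniform additive constant, and that constant --- the stray powers of $\sigma$, cf.\ Lemma \ref{geo} and the Remark after Proposition \ref{suffices} --- is precisely what the geodesic hypothesis has to control. The imprecision therefore sits at the heart of the problem, not at its periphery.

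More decisively, the step you yourself flag as subtle is the entire content of the proposition, and it is not proved: you propose to probe $p$ and $q$ with the geodesic cycles $\gamma_k$ and the pairwise parallel family $c_i$, and then assert that the absence of cyclic subpaths ``should force'' the simple matchings to separate classes and that no non-simple perfect matching ``carries information invisible to the simple ones,'' explicitly deferring the verification to the companion paper. Nothing in your text explains how to rule out a non-simple matching $x$ with $n_x(p)\neq n_x(q)$ while all simple-matching exponents agree; in particular the hypothesis that the $c_i$ are pairwise parallel (non-crossing) is never used, although it is exactly the structural input such a counting argument would need. Since the present article only cites \cite[Corollary 3.15]{BB} rather than reproving it, I cannot match your sketch against the companion's actual argument in detail, but as written this is a proof plan whose crucial lemma is left as an expectation, not a proof.
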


The algebra homomorphisms $\eta$ and $\tau$ on $kQ$ induce algebra homomorphisms on $A$,
$$\eta: A \to M_{|Q_0|}\left(k[\mathcal{P}]\right) \ \ \ \ \text{ and } \ \ \ \ \tau: A \to M_{|Q_0|}\left(k[\mathcal{S}]\right).$$
For $i,j \in Q_0$, consider the $k$-linear maps
$$\bar{\eta}: e_jAe_i \to k[\mathcal{P}] \ \ \ \ \text{ and } \ \ \ \ \bar{\tau}: e_jAe_i \to k[\mathcal{S}]$$
defined by sending $p \in e_jAe_i$ to the single nonzero matrix entry of $\eta(p)$ and $\tau(p)$ respectively; that is,
$$\eta(p) = \bar{\eta}(p)e_{ji} \ \ \ \ \text{ and } \ \ \ \ \tau(p) = \bar{\tau}(p)e_{ji}.$$
Then 
$$\bar{\eta}(p)|_{\substack{x = 1: \\ x \not \in \mathcal{S}}} = \bar{\tau}(p).$$
These maps are multiplicative on the paths of $Q$.
Moreover, for $a \in Q_1$, we have $a \in x$ if and only if $x \mid \bar{\eta}(a)$.

By the definition of $A$, if two paths $p,q \in e_jAe_i$ satisfy $\bar{\eta}(p) = \bar{\eta}(q)$, then $p = q$.
Furthermore, if $A$ is geodesic, then $\bar{\tau}(p) = \bar{\tau}(q)$ implies $p = q$, by Proposition \ref{suffices}. 
An important monomial is the $\bar{\tau}$-image of each unit cycle in $Q$, namely
$$\sigma := \prod_{x \in \mathcal{S}} x.$$

The following lemma will be useful in the next section.

\begin{Lemma} \label{geo}
If $p \in A$ is a cycle satisfying $\sigma \nmid \bar{\tau}(p)$, then $p$ is a geodesic cycle.
\end{Lemma}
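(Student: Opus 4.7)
The plan is to prove the contrapositive: if $p$ is not geodesic then $\sigma\mid\bar\tau(p)$. Unfolding the definition of ``not geodesic'' produces a representative (of some cyclic permutation of $p$) whose lift to $Q^+$ contains a cyclic subpath $r$, i.e., a directed cycle in $Q^+$. Writing the lift as $\widehat q\cdot r\cdot\widehat s$, I would invoke multiplicativity of $\bar\tau$, its invariance under cyclic permutations (from commutativity of $k[\mathcal{S}]$), and its well-definedness on $A$ to obtain
$$\bar\tau(p)\;=\;\bar\tau(q)\,\bar\tau(r)\,\bar\tau(s),$$
reducing the lemma to showing $\sigma\mid\bar\tau(r)$ for every directed cycle $r$ in $Q^+$. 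Since $\bar\tau(a)=\prod_{x\in\mathcal{S},\,x\ni a}x$ on arrows, this is equivalent to showing that $r$ contains an arrow from every simple matching; in fact I plan to prove the stronger statement that $r$ meets every perfect matching.

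Because $\Sigma^+$ is simply connected, $r$ bounds a disk $D\subset\Sigma^+$ tiled by finitely many faces of $Q^+$. The key structural step is to exploit the standard 2-coloring of faces of a dimer quiver (white/black, with white unit cycles oriented CCW and black oriented CW relative to a fixed orientation of $\Sigma^+$), and to show that every face inside $D$ adjacent to $r$ is white: if a black face shared an arrow $a$ with $r$, then the black unit cycle would traverse $a$ opposite to the direction $r$ travels around $D$, contradicting that $r$ is a directed cycle. This is the main technical obstacle, since it requires careful use of the surface orientation together with the definition of unit cycle.

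With that in hand, the proof concludes by a bookkeeping argument. Fix $x\in\mathcal{P}$, let $W,B$ count the white, black faces of $D$, and let $I_x,J_x$ count the interior, boundary arrows of $D$ that lie in $x$. Each face contains exactly one arrow of $x$; each interior $x$-arrow joins one white and one black face in $D$; and each boundary $x$-arrow lies in exactly one (necessarily white) face of $D$. This gives $B=I_x$ and $W=I_x+J_x$, so $J_x=W-B$ independently of $x$. Since $r$ has at least one arrow, and every arrow belongs to some perfect matching by assumption, some $x_0$ satisfies $J_{x_0}\geq 1$, hence $W-B\geq 1$ and $J_x\geq 1$ for every perfect matching $x$. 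In particular, every simple matching divides $\bar\tau(r)$, yielding $\sigma\mid\bar\tau(r)$, and hence $\sigma\mid\bar\tau(p)$ via the above factorization, completing the proof of the contrapositive.
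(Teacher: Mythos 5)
Your proposal is correct in substance but takes a genuinely different, more self-contained route than the paper. The paper's proof is two lines: it passes to the contrapositive exactly as you do, obtains a cyclic subpath $q$ of a lift, and then simply cites \cite[Lemma 3.1.i]{BB}, which states that the $\bar{\tau}$-image of any cycle in $Q^+$ is a power $\sigma^{\ell}$, $\ell \geq 1$. You instead reprove that key fact from scratch: after the same factorization $\bar{\tau}(p) = \bar{\tau}(q)\bar{\tau}(r)\bar{\tau}(s)$, you bound a disk $D$ in the simply connected cover and run the standard bipartite (white/black face) count, getting $B = I_x$ and $W = I_x + J_x$, hence $J_x = W - B \geq 1$ for every perfect matching $x$; note this actually recovers the full strength of the cited lemma, since it gives $\bar{\tau}(r) = \sigma^{W-B}$ exactly, not just divisibility. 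What the paper's approach buys is brevity and consistency with the companion article, where the combinatorial work is done once; what yours buys is independence from \cite{BB} and an explicit topological mechanism. Two small points need patching for your argument to be airtight: (i) the cyclic subpath $r$ need not be a simple closed curve, so ``$r$ bounds a disk'' requires either choosing a minimal (hence vertex-simple) cyclic subpath or decomposing $r$ into simple directed subcycles and applying the count to one of them; (ii) whether the faces of $D$ adjacent to $r$ are all white or all black depends on whether $r$ runs counterclockwise or clockwise around $D$ (your stated justification conflates the two traversal directions), but this is harmless since swapping the colors swaps the roles of $W$ and $B$ and yields $J_x = B - W$ instead, so the argument goes through after a ``without loss of generality.''
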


\begin{proof}
If $p$ is not a geodesic cycle, then there is a cyclic permutation of a lift of a representative of $p$ with a cyclic subpath $q$ in $Q^+$.
Since $q$ is a cycle in $Q^+$, we have $\bar{\tau}(q) = \sigma^{\ell}$ for some $\ell \geq 1$ by \cite[Lemma 3.1.i]{BB}.
\end{proof}

\begin{Remark} \rm{
Let $A$ be a geodesic ghor algebra on a surface $\Sigma$, and fix $i \in Q_0$.
If $\Sigma$ is a torus, then the geodesic assumption implies that for each $j,k \in \pi^{-1}(i)$, there is a path $p^+$ from $j$ to $k$ in $Q^+$ such that $\sigma \nmid \overbar{\tau}(p)$ \cite[Proposition 4.20.iii]{B1}.
However, if $\Sigma$ is hyperbolic, then this implication no longer holds.

Indeed, suppose $\Sigma$ is hyperbolic, that is, $N \geq 3$.
Fix unit vectors $u_1, u_2 \in \mathbb{Z}^N$ for which $u_1 \not = \pm u_2$.
Let $p_1, p_2, q_1, q_2$ be paths in $kQ^+e_j$ that are lifts of cycles with classes
$$[\pi(p_1)] = -[\pi(p_2)] = u_1 \ \ \ \ \text{ and } \ \ \ \ [\pi(q_1)] = -[\pi(q_2)] = u_2.$$
Consider the paths
$$s := q_2p_2q_1p_1 \ \ \ \ \text{ and } \ \ \ \ t := q_2q_1p_2p_1$$
in $Q^+$.
Then $\bar{\tau}(s) = \bar{\tau}(t)$, and so $\pi(s) = \pi(t)$ in $A$.
Observe that $t$ is a cycle in $Q^+$, whereas $s$ is not.
Furthermore, the $\bar{\tau}$-image of any cycle in $Q^+$ is a power of $\sigma$ \cite[Lemma 3.1.i]{BB}.
Thus, although $s$ is not a cycle, we have $\bar{\tau}(s) = \bar{\tau}(t) = \sigma^{\ell}$ for some $\ell \geq 1$. 
It therefore follows that the monomial of each path in $Q^+$ from $\operatorname{t}(s)$ to $\operatorname{h}(s)$ is a power of $\sigma$ \cite[Lemma 3.1.ii]{BB}, a feature that never occurs if $\Sigma$ is flat. 
}\end{Remark}

If $\Sigma$ is a torus, then a ghor algebra $A$ is geodesic if and only if it is noetherian, if and only if its center $R$ is noetherian, if and only if $A$ is a finitely generated $R$-module \cite[Theorem 1.1]{B2}.
If $\Sigma$ is hyperbolic, then only one direction of the implication survives:

\begin{Proposition} \cite[Lemma 4.4]{BB}
If the center $R$ of a ghor algebra $A$ is noetherian, then
\begin{enumerate}
 \item $A$ is geodesic;
 \item $A$ is noetherian; and
 \item $A$ is a finitely generated $R$-module.
\end{enumerate}
\end{Proposition}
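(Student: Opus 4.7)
The plan is to establish (1) first, then deduce (3); claim (2) follows immediately since any algebra module-finite over a noetherian commutative ring is noetherian.

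For (1), I would argue by contrapositive. Suppose $A$ is not geodesic, so either (a) some direction $k \in [2N]$ admits no geodesic cycle with class $(\delta_{k\ell} - \delta_{k+N,\ell})_{\ell \in [N]}$, or (b) no pairwise parallel family of geodesic cycles $\{c_i\}_{i \in Q_0}$ with $c_{\operatorname{t}(\gamma_k)} = \gamma_k$ exists. The strategy is to exhibit an infinite strictly ascending chain of ideals in $R$. The key observation is that, since $\eta$ is injective on $A$ by definition, a central element of $A$ is a sum $\sum_{i \in Q_0} z_i$ with $z_i \in e_i A e_i$ whose $\bar{\eta}$-images all coincide in $k[\mathcal{P}]$; thus $R$ embeds in $k[\mathcal{P}]$ via this common $\bar{\eta}$-image. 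In case (a), an $\bar{\eta}$-analog of Lemma \ref{geo} forces every cycle realizing the offending class to have $\bar{\eta}$-image divisible by arbitrarily large powers of the unit-cycle monomial $\prod_{x \in \mathcal{P}} x$: the ``failure to be geodesic'' means the path wraps additional unit cycles in $Q^+$. This divisibility hierarchy translates into an infinite ascending sequence of principal ideals in $R$. Case (b) is handled analogously, using that two transversely intersecting geodesic cycles cannot simultaneously appear as components of a common central element, so an attempted central family at each vertex produces a bottomless tower of proper inclusions.

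For (3), assume $A$ is geodesic. By Proposition \ref{suffices}, paths in $e_j A e_i$ are determined by their $\bar{\tau}$-images. For each ordered pair $(i,j)$ with $e_j A e_i \ne 0$, I would fix a representative path $p_{ji}$, and show that any other path $q \in e_j A e_i$ factors, modulo a central element, through one of finitely many such representatives. The mechanism is that a sufficiently long path in $Q^+$ must contain a cyclic subpath assembled from the geodesic cycles $c_i$, by a pigeonhole argument in the fundamental polygon $P$: the path's lift crosses translates of $P$ in only finitely many homology directions per bounded segment, and eventually repeats a vertex within a bounded region. Such a cyclic subpath descends to multiplication by a central element of $A$, yielding a finite module generating set $\{p_{ji}\}$ for $A$ over $R$.

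The main obstacle is (1): extracting an infinite chain of ideals in $R$ from the failure of the geodesic property is delicate because, as the Remark preceding the proposition illustrates, in the hyperbolic setting distinct paths in $Q^+$ can share a $\bar{\tau}$-image that is a pure power of $\sigma$ without both being cycles. One therefore cannot read $R$ directly off the cyclic paths in $Q$, and care is required to ensure the constructed chain is strictly ascending in $R$ itself and not merely in the ambient algebra $k[\mathcal{P}]$. Resolving this likely requires the machinery of \cite{BB} matching central ideals to ``wrapping depth'' in the cycle algebra.
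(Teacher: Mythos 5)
The paper itself does not prove this proposition---it is quoted from the companion article \cite{BB}---so there is no in-text argument to compare against; judged on its own, your proposal has a fatal gap in part (3). Your plan there is to assume only that $A$ is geodesic and then produce a finite generating set by a pigeonhole argument whose key mechanism is that ``such a cyclic subpath descends to multiplication by a central element of $A$.'' This never uses the hypothesis that $R$ is noetherian, so if it worked it would prove that every geodesic ghor algebra is a finitely generated module over its center. That is false on hyperbolic surfaces, and the paper's own examples show it: in the genus~$2$ example of Section \ref{octagonsection}, $A$ is geodesic, the powers $\overbar{\alpha}_1^{\,n}$ are $\bar{\tau}$-images of cycles at a fixed vertex $i$, yet no nonconstant monomial of $R = k[\sigma] + (\overbar{\alpha}_j \overbar{\alpha}_{j+1} \overbar{\alpha}_{j+2}, \sigma^2 \mid j)S$ divides $\overbar{\alpha}_1^{\,n}$ (each such monomial contains one of the variables $x_1, y_2, z_4$ absent from $\overbar{\alpha}_1$), so finitely many paths can never generate $e_iAe_i$ over $R$, and hence $A$ is not a finitely generated $R$-module. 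The precise failure point is the claim that a cyclic subpath yields a central element: on a hyperbolic surface a non-nullhomotopic cycle based at one vertex is generally not central, since centrality requires equal $\bar{\tau}$-images at \emph{every} vertex, cf.\ $R \cong k[\cap_{i} \bar{\tau}(e_iAe_i)]$ in Theorem \ref{maintheorem}; this is exactly the torus-versus-hyperbolic distinction stressed in the Remark you cite. Any correct proof of (3) must exploit noetherianity of $R$ directly (e.g.\ to force enough of the cycle algebra into the center), which your sketch never does.

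Part (1) is also not yet a proof, as you partly concede. The contrapositive of Lemma \ref{geo} gives only that a particular non-geodesic cycle has $\bar{\tau}$-image divisible by $\sigma$ once; it does not give that \emph{every} cycle realizing the offending class has $\bar{\eta}$-image divisible by arbitrarily large powers of $\prod_{x \in \mathcal{P}} x$, and the passage from such divisibility to a strictly ascending chain of ideals \emph{of $R$} is precisely the hard content: the chains exhibited in the examples, such as $\overbar{\alpha}_1\sigma^2R \subset (\overbar{\alpha}_1, \overbar{\alpha}_1^2)\sigma^2R \subset \cdots$, depend on knowing exactly which monomials are $\bar{\tau}$-images of cycles at every vertex, i.e.\ on the structure theorem for $R$ proved in \cite{BB}. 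Likewise, in case (b) the assertion that two transversely intersecting geodesic cycles cannot occur as components of one central element is stated without justification. Only your reduction of (2) to (3) (module-finite over a commutative noetherian ring implies noetherian) is complete as written.
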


In contrast to the torus case, the centers of geodesic ghor algebras on hyperbolic surfaces are almost always nonnoetherian.\footnote{It is possible that there is only a finite number of geodesic ghor algebras that are noetherian for each genus $g \geq 2$.} 
We can nevertheless view such a center as the coordinate ring on a geometric space, using the framework of nonnoetherian geometry introduced in \cite{B3} (see also \cite{B5}).
In short, the geometry of a nonnoetherian coordinate ring of finite Krull dimension looks just like a finite type algebraic variety, except that it has some positive dimensional closed points. 

\begin{Definition} \rm{
A \textit{depiction} of an integral domain $k$-algebra $R$ is a finitely generated overring $S$ such that the morphism
$$\operatorname{Spec}S \to \operatorname{Spec}R, \ \ \ \mathfrak{q} \mapsto \mathfrak{q} \cap R,$$
 is surjective, and
\begin{equation*} \label{U}
U_{S/R} := \{ \mathfrak{n} \in \operatorname{Max}S \ | \ R_{\mathfrak{n} \cap R } = S_{\mathfrak{n}} \} = \{ \mathfrak{n} \in \operatorname{Max}S \ | \ R_{\mathfrak{n} \cap R} \text{ is noetherian} \} \not = \emptyset.
\end{equation*}
}\end{Definition}

For example, the algebra $S = k[x,y]$ is a depiction of its nonnoetherian subalgebra $R = k + xS$.
We thus view $\operatorname{Max}R$ as the variety $\operatorname{Max}S = \mathbb{A}^2_k$, except that the line $\{ x = 0 \}$ is identified as a $1$-dimensional (closed) point of $\operatorname{Max}R$.
In particular, the complement $\{x \not = 0\} \subset \mathbb{A}^2_k$ is the `noetherian locus' $U_{S/(k+xS)}$ \cite[Proposition 2.8]{B3}.

The following is the main theorem of the companion article \cite{BB}.

\begin{Theorem} \label{maintheorem}
Suppose $A = kQ/\operatorname{ker}\eta$ is a geodesic ghor algebra on a surface $\Sigma$ obtained from a regular $2N$-gon $P$ by identifying the opposite sides, and vertices, of $P$.
Set 
\begin{equation*} \label{R and S}
R = k\left[ \cap_{i \in Q_0} \bar{\tau}(e_iAe_i) \right] \ \ \ \ \text{ and } \ \ \ \ S = k\left[ \cup_{i \in Q_0} \bar{\tau}(e_iAe_i) \right];
\end{equation*}
then $R$ is isomorphic to the center of $A$.
Furthermore, the following holds.
\begin{enumerate}
\item If there is a cycle $p$ such that $\overbar{p}^n \not \in R$ for each $n \geq 1$, then $A$ and $R$ are nonnoetherian.
In this case, $R$ is depicted by the cycle algebra $S$.
\item The center $R$ and cycle algebra $S$ have Krull dimension
\begin{equation*}
\operatorname{dim}R = \operatorname{dim}S = N + 1.
\end{equation*}
In particular, if $\Sigma$ is a smooth genus $g \geq 0$ surface, then
$$\operatorname{dim}R = \operatorname{rank}H_1(\Sigma) + 1 = 2g + 1.$$
 \item At each point $\mathfrak{m} \in \operatorname{Max}R$ for which the localization $R_{\mathfrak{m}}$ is noetherian, the localization $A_{\mathfrak{m}} := A \otimes_R R_{\mathfrak{m}}$ is an endomorphism ring over its center: for each $i \in Q_0$, we have
$$A_{\mathfrak{m}} \cong \operatorname{End}_{R_{\mathfrak{m}}}(A_{\mathfrak{m}}e_i).$$
The locus of such points lifts to the open dense subset $U_{S/R} \subset \operatorname{Max}S$ of the cycle algebra.  
\end{enumerate}
\end{Theorem}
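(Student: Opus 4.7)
The plan is to establish the center identification first and then address the three enumerated claims in turn. For $R \cong Z(A)$: any central $z \in Z(A)$ decomposes as $z = \sum_{i \in Q_0} z_i$ with $z_i \in e_iAe_i$; the centrality condition $az = za$ for each arrow $a$ yields $az_{\operatorname{t}(a)} = z_{\operatorname{h}(a)}a$. Applying $\bar\tau$ and using that $\bar\tau(a)$ is a nonzero monomial in $k[\mathcal{S}]$ (hence a non-zero-divisor), one obtains $\bar\tau(z_{\operatorname{t}(a)}) = \bar\tau(z_{\operatorname{h}(a)})$. Strong connectedness of $Q$ then forces all components to share a common image $f \in k[\mathcal{S}]$, which by construction lies in $\bigcap_{i \in Q_0} \bar\tau(e_iAe_i)$. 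The geodesic hypothesis together with Proposition~\ref{suffices} makes $\bar\tau$ injective on each corner ring, so $z \mapsto f$ is well-defined and injective onto $R$; surjectivity is checked by reconstructing the tuple $(z_i)_i$ from any $f \in R$ using paths representing each diagonal entry.

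For the Krull dimension, exploit the $2N$ geodesic cycles $\gamma_k$ whose classes $[\gamma_k] = (\delta_{k\ell} - \delta_{k+N,\ell})_\ell$ generate $\mathbb{Z}^N$ modulo the relation $[\gamma_{k+N}] = -[\gamma_k]$. By Lemma~\ref{geo}, no $\bar\tau(\gamma_k)$ is divisible by $\sigma$; combined with the fact that cycles of equal class have $\bar\tau$-images differing only by powers of $\sigma$ (\cite[Lemma 3.1]{BB}), the set $\bar\tau(\gamma_1),\ldots,\bar\tau(\gamma_N),\sigma$ is algebraically independent in $k[\mathcal{S}]$, while every $\bar\tau$-image of a cycle is expressible in these generators up to a $\sigma$-factor. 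Hence $\dim S = N+1$; since sufficient powers of every generator of $S$ lie in $R$, the two share a fraction field and $\dim R = N+1$ as well. For smooth genus $g$ one has $N = 2g$, recovering $2g+1 = \operatorname{rank} H_1(\Sigma) + 1$.

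For (1), the existence of a cycle $p$ with $\bar p^n \notin R$ for all $n \geq 1$ produces a strictly ascending chain of ideals in $R$ (and a corresponding chain in $A$) by lifting successive $\bar p^n$ via multiplication by suitable elements of $R$, contradicting noetherianity in both rings. Depiction of $R$ by $S$ then follows from the general framework of \cite{B3}: $S$ is a finitely generated overring with the same fraction field as $R$, giving surjectivity of $\operatorname{Spec} S \to \operatorname{Spec} R$ via lying-over, and $U_{S/R}$ is shown to be nonempty by localizing at a maximal ideal of $S$ that avoids the zero loci of the obstructing monomials. For (3), the natural map $\phi: A \to \operatorname{End}_R(Ae_i)$ by left multiplication is injective because $A$ is prime (strong connectedness together with $\bar\tau$-injectivity on paths); after localizing at $\mathfrak{m}$ with $R_{\mathfrak{m}}$ noetherian, surjectivity reduces to realizing every endomorphism of $A_\mathfrak{m} e_i$ as multiplication by a localized path, which becomes possible precisely because the noetherian condition at $\mathfrak{m}$ inverts the $\sigma$-monodromy obstructions encountered on the hyperbolic cover. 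The hardest step is this final surjectivity: it translates the abstract depiction picture from (1) into an explicit reconstruction of endomorphisms, and requires delicate control over parallel geodesic cycles on $\Sigma^+$ to show that enough paths become available after localization.
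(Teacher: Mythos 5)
First, a point of comparison: this paper does not actually prove Theorem \ref{maintheorem}; it is quoted from the companion article \cite{BB}, and the only in-paper evidence of how the argument goes is the supporting statements (Proposition \ref{suffices}, Lemma \ref{geo}) and the explicit computations in Sections \ref{octagonsection} and \ref{folsection}. Measured against those, your identification of the center is in the right spirit: centrality forces a common $\bar{\tau}$-image across the corner rings, and injectivity of $\bar{\tau}$ on each $e_jAe_i$ (which needs the geodesic hypothesis via Proposition \ref{suffices}) lets you lift an element of $\cap_i \bar{\tau}(e_iAe_i)$ back to a central element. The rest of the proposal, however, has genuine gaps.

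The most serious one is in your dimension argument. You claim that ``sufficient powers of every generator of $S$ lie in $R$,'' but this is false exactly in the cases the theorem is about: in the genus $2$ example, $R = k[\sigma] + (\overbar{\alpha}_j\overbar{\alpha}_{j+1}\overbar{\alpha}_{j+2}, \sigma^2 \mid j)S$ contains no power of $\overbar{\alpha}_1$ — the hypothesis of part (1) is precisely that such a cycle exists. Moreover, even with a common fraction field, equality of transcendence degrees does not by itself give $\operatorname{dim}R = \operatorname{dim}S$ for a nonnoetherian subring; the paper's own examples instead verify $U_{S/R} \neq \emptyset$ and invoke \cite[Theorem 2.5.4]{B3}, so the depiction statement in (1) is what transports the dimension from $S$ to $R$, and your logical order (dimension first, depiction later) cannot stand as written. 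Related gaps: the algebraic independence of $\bar{\tau}(\gamma_1), \ldots, \bar{\tau}(\gamma_N), \sigma$ and the claim that every cycle's image is a $\sigma$-shifted word in them are asserted rather than proved (and the latter is delicate, since for example $\overbar{\alpha}_{j+N} = \sigma^{\ell}/\overbar{\alpha}_j$ is not polynomial in your chosen generators), and the finite generation of $S$, needed both for the depiction and for the dimension count, is assumed silently. Your surjectivity of $\operatorname{Spec}S \to \operatorname{Spec}R$ via lying-over also fails, because $S$ is not integral over $R$ (already the model case $k + xS \subset S = k[x,y]$ is non-integral); surjectivity must be argued directly. Finally, in (3) the injectivity of $A \to \operatorname{End}_R(Ae_i)$ is plausible, but the surjectivity after localizing at a noetherian point — the actual content of the statement — is only described metaphorically (``inverts the $\sigma$-monodromy obstructions''): no construction is given of a path or localized element realizing a given endomorphism, nor of the asserted lift of the noetherian locus of $R$ to $U_{S/R}$.
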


\section{Examples}

In the following, we consider explicit examples of geodesic ghor algebras.
Recall that $\Sigma$ is a closed surface obtained from a regular $2N$-gon $P$ by identifying the opposite sides, and vertices, of $P$. 
Set $[m] := \{1, \ldots, m\}$.
For a path $p$, set $\overbar{p} := \bar{\tau}(p)$.

\subsection{Polynomial rings are geodesic ghor algebras} \label{polynomialsection}
\ \\
\indent The simplest possible geodesic ghor algebra on a $2N$-gon $P$ has one arrow along each side, and a single diagonal arrow in the interior of $P$.  
The cases $N = 2,3,4$ are shown in Figure~\ref{fig:polynomial}.

In such a quiver, each arrow is contained in a unique simple matching.
Thus there is one simple matching for each arrow.
Since each arrow is a cycle, the center, cycle algebra, and ghor algebra all coincide.
Consequently, the ghor algebra is the polynomial ring in $N+1$ variables.  
It follows that every polynomial ring in at least $3$ variables arises as a ghor algebra.

\begin{figure}
\includegraphics[width=10cm]{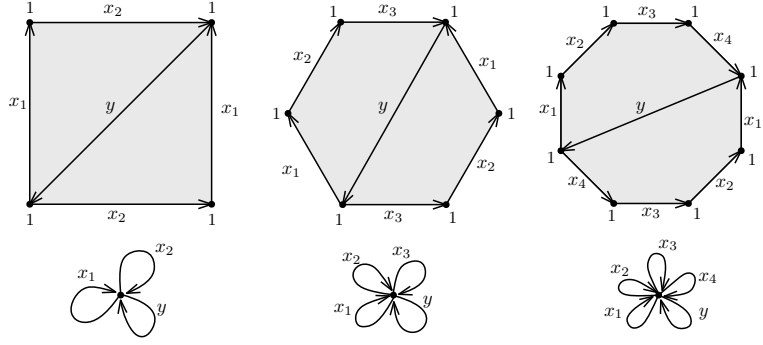}
\caption{The polynomial ghor algebras $k[x_1,x_2,y]$, $k[x_1,x_2,x_3,y]$, and $k[x_1,x_2,x_3,x_4,y]$.}
\label{fig:polynomial}
\end{figure}

\subsection{A geodesic ghor algebra on a genus $2$ surface} \label{octagonsection}
\ \\
\indent Consider the ghor algebra $A = kQ/\ker \eta$ on a smooth genus $2$ surface with quiver $Q$ given in Figure~\ref{fig:octagon}.i (the identifications of the sides of the polygon $P$ are indicated by color). 
We will determine the center $R$ and cycle algebra $S$ of $A$ explicitly, and show that $R$ is nonnoetherian of Krull dimension $5$.

$Q$ has twelve simple matchings, shown in Figure \ref{simple matchings octagon}.
The nontrivial simple matchings $y_j$ and $z_j$ may be determined using special partitions of $Q_1$ called subdivisions, which are described in \cite[Section 3]{BB}.
The polynomial ring $k[\mathcal{S}]$ is thus
$$k[\mathcal{S}] = k[x_1, \ldots, x_4, y_1, \ldots, y_4, z_1, \ldots, z_4].$$
Note that $x_{j+1}$ is the rotation of $x_j$ by $\frac{\pi}{2}$ in the counter-clockwise direction, and similarly for the simple matchings $y_j$ and $z_j$.
Using the four $x_j$ simple matchings and Lemma \ref{geo}, it is straightforward to verify that $A$ is geodesic.

Consider the eight cycles $\alpha_1, \ldots, \alpha_8$ that run along the sides of the fundamental polygon $P$, as shown in Figure \ref{fig:octagon}.ii.
These cycles have $\bar{\tau}$-images
$$\overbar{\alpha}_j = \left\{ \begin{array}{ll}
x_{j+1}x_{j+2}^2x_{j+3} y_j y_{j+2}y_{j+3}^2z_j z_{j+1}^2z_{j+2} & \text{ if $j$ is odd}\\
(x_{j+2}x_{j+3}y_{j+2}y_{j+3}z_{j+2}z_{j+3})^2 & \text{ if $j$ is even}
\end{array} \right.$$
Observe that $A$ has cycle algebra
$$S = k[\sigma, \overbar{\alpha}_j \, | \, j \in [8]] \subset k[\mathcal{S}],$$
and center
$$R = k[\sigma] + (\overbar{\alpha}_j \overbar{\alpha}_{j+1} \overbar{\alpha}_{j+2}, \sigma^2 \, | \, j \in [8])S,$$
with indices taken modulo $8$.

The center $R$ is nonnoetherian since it contains the infinite ascending chain of ideals
$$\overbar{\alpha}_1\sigma^2R \subset (\overbar{\alpha}_1, \overbar{\alpha}_1^2)\sigma^2R \subset (\overbar{\alpha}_1, \overbar{\alpha}_1^2, \overbar{\alpha}_1^3) \sigma^2R \subset \cdots.$$
Nevertheless, we claim that $R$ satisfies the ascending chain condition on prime ideals, and in particular has Krull dimension $5 = 2g +1$.

If $\mathfrak{n} \in \operatorname{Max}S$ is a point for which none of the monomial generators of $S$ vanish (for example, if each monomial is set equal to $1$), then $R_{\mathfrak{n} \cap R} = S_{\mathfrak{n}}$.
Consequently, the noetherian locus $U_{S/R}$ is nonempty.
Therefore $\operatorname{dim}R = \operatorname{dim}S$, by \cite[Theorem 2.5.4]{B3}.
It thus suffices to show that $S$ has Krull dimension $5$.

For $j \in [5]$ set
$$\mathfrak{p}_j := (\sigma, \overbar{\alpha}_1, \ldots, \overbar{\alpha}_{3 + j})S,$$
and consider the chain of ideals of $S$
\begin{equation} \label{p0}
0 \subseteq \mathfrak{p}_1 \subseteq \mathfrak{p}_2 \subseteq \cdots \subseteq \mathfrak{p}_5.
\end{equation}

By \cite[Theorem 3.11]{BB}, two cycles $p$ and $q$ are in the same class if and only if $\overbar{p} = \overbar{q} \sigma^{\ell}$ for some $\ell \in \mathbb{Z}$.
In particular, if $\Sigma$ is smooth, then two cycles are homologous if and only if $\overbar{p} = \overbar{q}$.
Furthermore, suppose $\mathfrak{q}$ is a prime ideal of $S$.
Then $\overbar{\alpha}_i \in \mathfrak{q}$ implies $\sigma \in \mathfrak{q}$ since $\overbar{\alpha}_i \overbar{\alpha}_{i+4} = \sigma^2$.
But this then implies $\overbar{\alpha}_k$ or $\overbar{\alpha}_{k+4}$ is in $\mathfrak{q}$ for each $k \in [4]$.
Therefore each $\mathfrak{p}_j$ is prime, and $\mathfrak{p}_1$ is a height one prime.
Consequently, $\operatorname{dim}S \leq 5$.
But the inclusions in (\ref{p0}) are strict, again since any two cycles $p$ and $q$ are homologous if and only if $\overbar{p} = \overbar{q} \sigma^{\ell}$ for some $\ell \in \mathbb{Z}$.
Whence $\operatorname{dim}S \geq 5$.
It follows that $\operatorname{dim}R = \operatorname{dim}S = 5$.

\begin{figure}
$$\begin{array}{ccc}
\includegraphics[scale=.63]{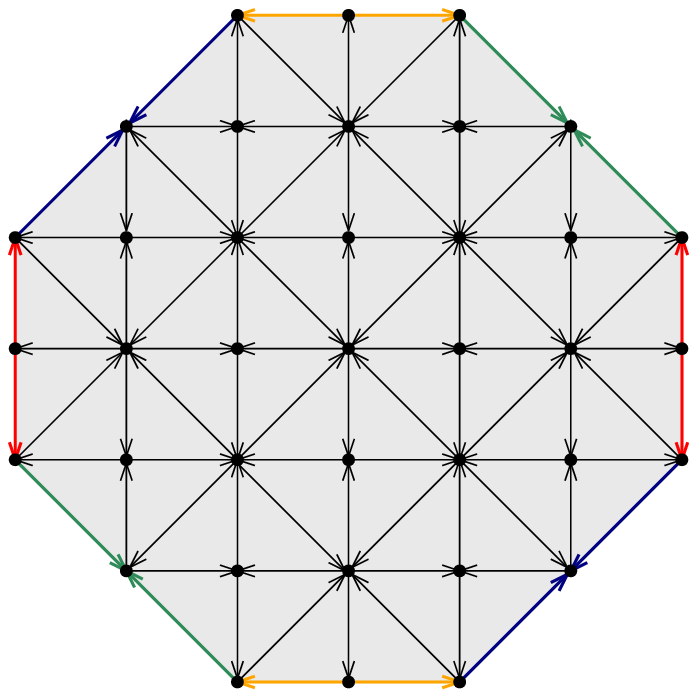}
&  
\includegraphics[scale=.63]{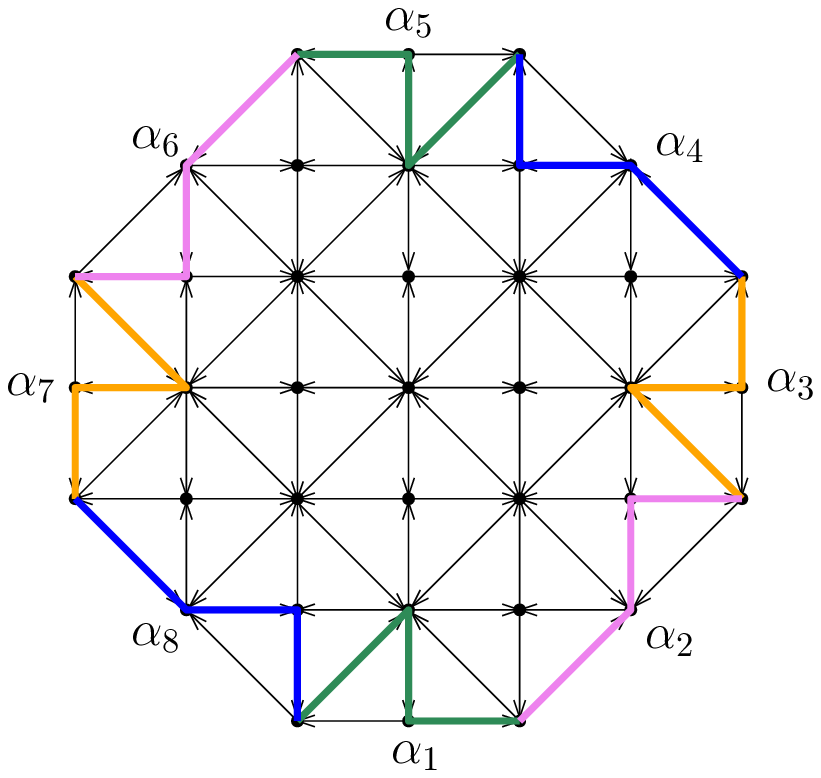}
&
\includegraphics[scale=.63]{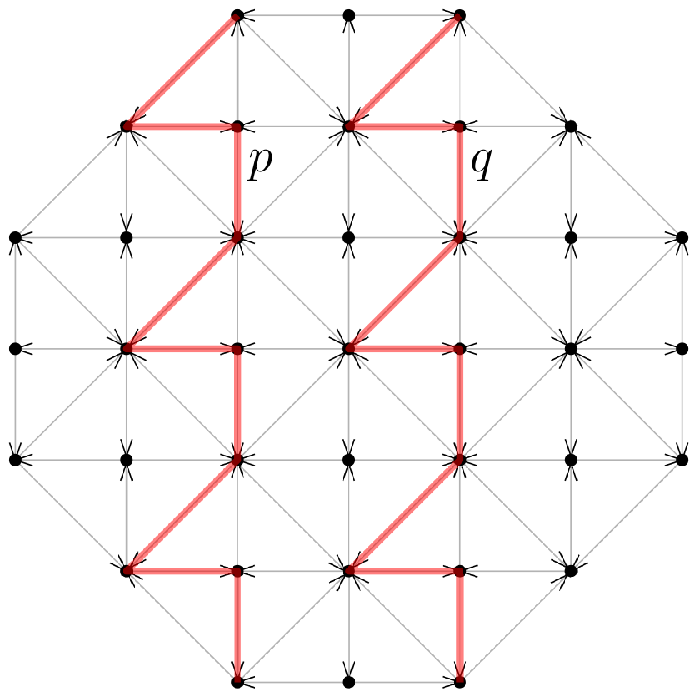}
\\
(i) & (ii) & (iii)
\end{array}$$
\caption{(i) The dimer quiver $Q$ of Section \ref{octagonsection}.  
(ii) The cycles $\alpha_j$ run along the sides of the fundamental polygon $P$.
(iii) The homologous cycles $p,q$ are equal in the ghor algebra $A$, but not in the dimer algebra of $Q$.} 
\label{fig:octagon}
\end{figure}

\begin{figure}
\includegraphics[width=14cm]{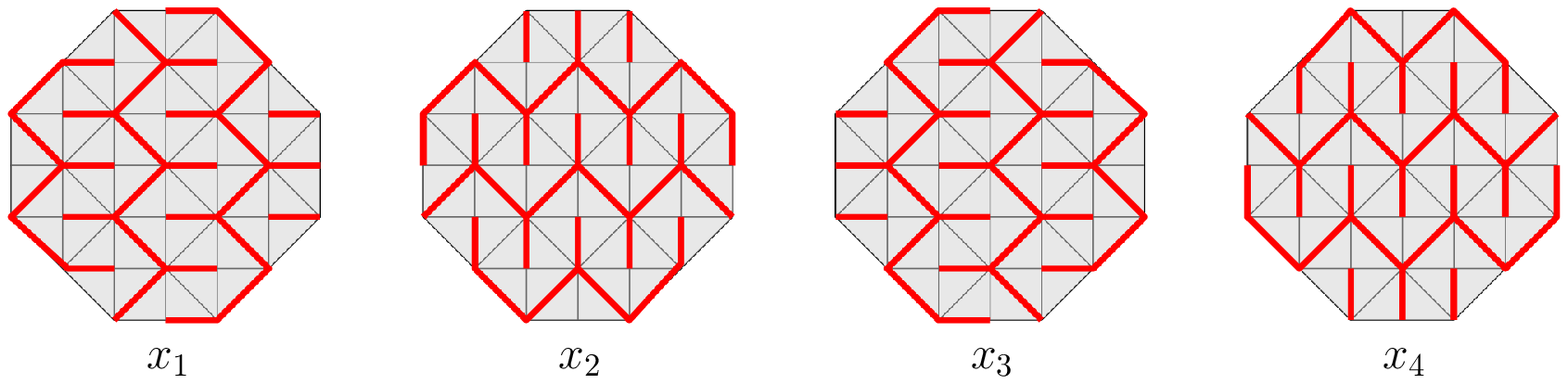}\\
\ \\
\includegraphics[width=14cm]{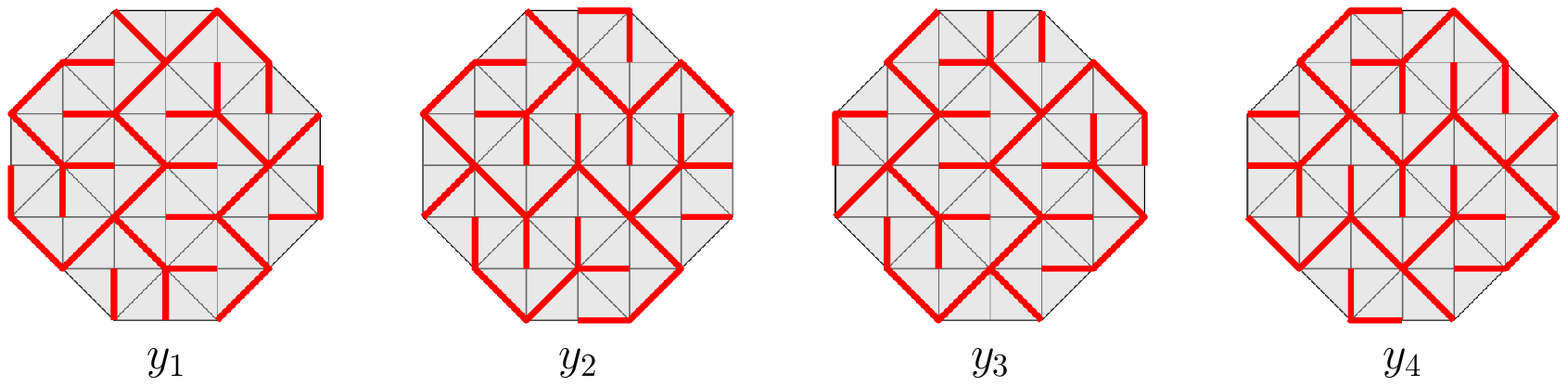}\\ 
\ \\
\includegraphics[width=14cm]{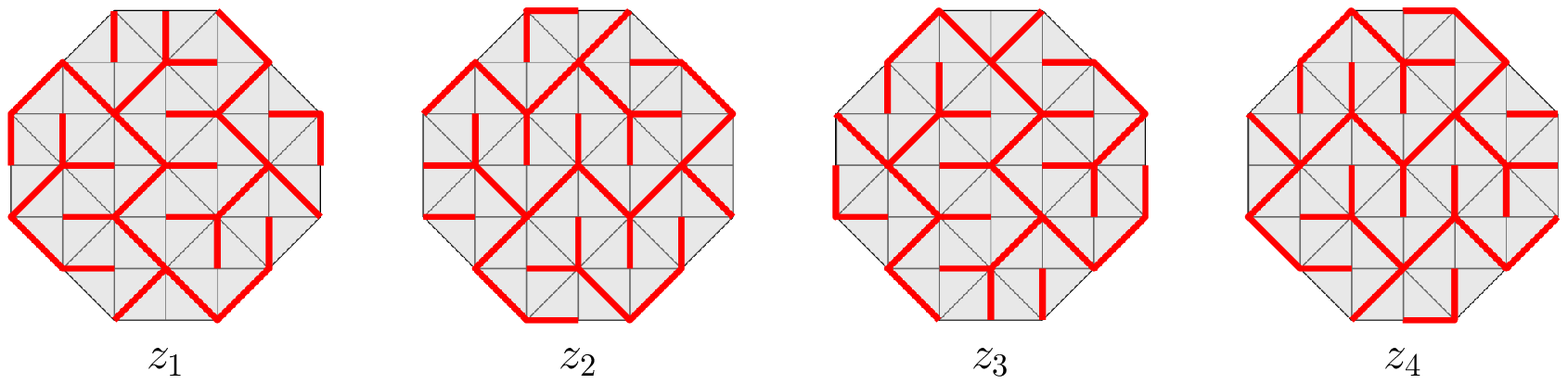}
\caption{The simple matchings for the geodesic ghor algebra of Section \ref{octagonsection}.}
\label{simple matchings octagon}
\end{figure}

\subsection{Flower of life: a geodesic ghor algebra on a pinched torus} \label{folsection}
\ \\
\indent Finally, consider the ghor algebra $A = kQ/\ker \eta$ on a pinched torus with quiver $Q$ given in
Figure~\ref{fig:flower}.i
(the identifications of the sides of the polygon $P$ are indicated by color).\footnote{The quiver in this example fits into a pattern of overlapping circles called the `flower of life' in the New Age literature.} 
We will determine the center $R$ and cycle algebra $S$ of $A$ explicitly, and show that $R$ is nonnoetherian of Krull dimension $4$.

\begin{figure}
$$\begin{array}{ccc}
\includegraphics[scale=.725]{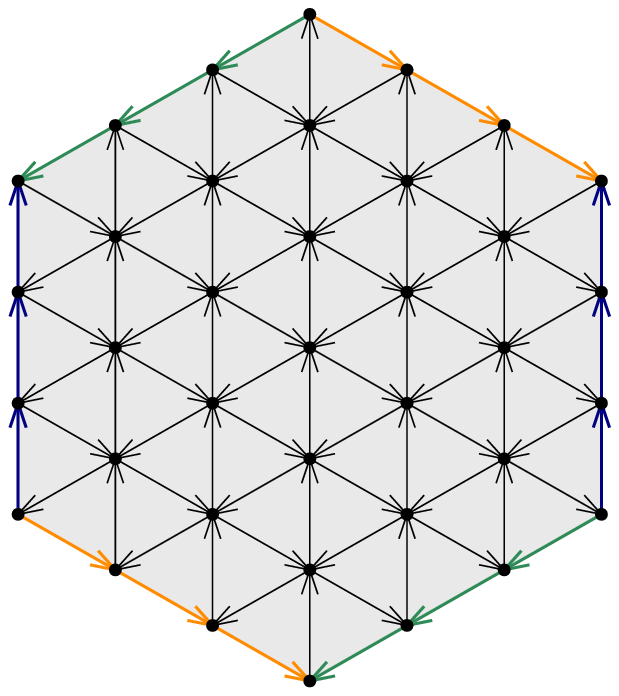}
&
\includegraphics[scale=.725]{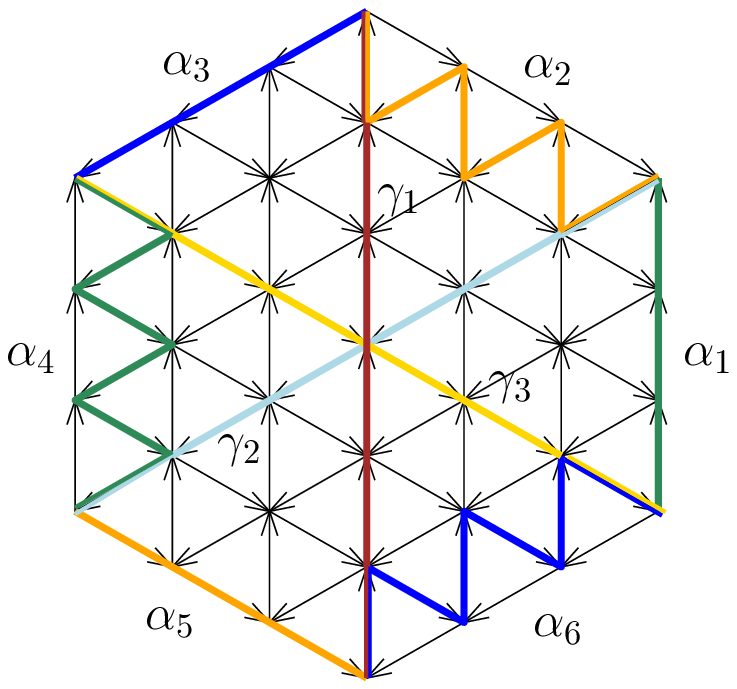}
&
\includegraphics[scale=.725]{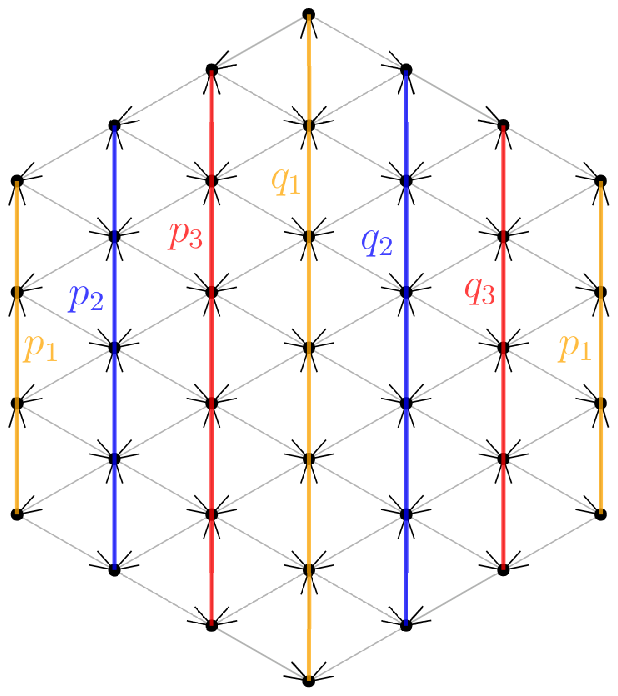}
\\
(i) & (ii) & (iii)
\end{array}$$
\caption{(i) The dimer quiver $Q$ of Section \ref{folsection}.
(ii) The cycles $\alpha_j$ run along the sides of the fundamental polygon $P$.
(iii) The cycles $q_jp_j$ each have $\bar{\tau}$-image $\overbar{\gamma_1 \alpha_1}$ (note that $p_1 = \alpha_1$ and $q_1 = \gamma_1$).}
\label{fig:flower}
\end{figure}

\begin{figure}
$$\begin{array}{ccccc}
\includegraphics[scale=.725]{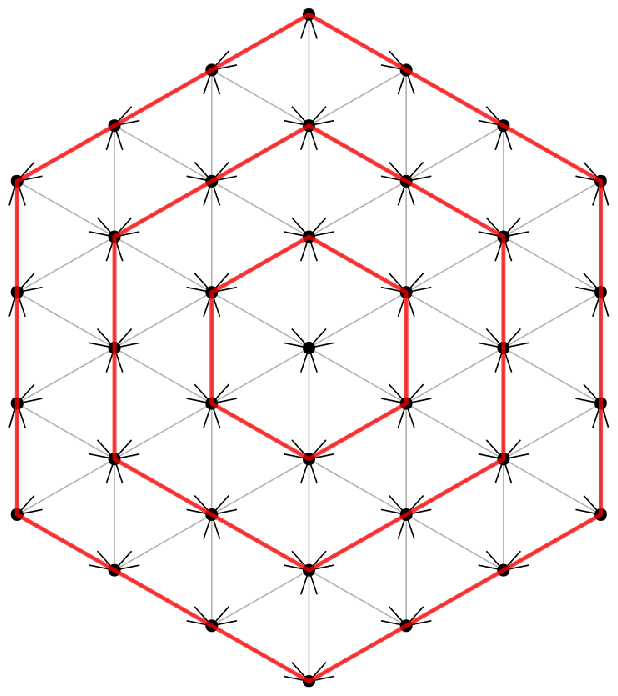}
& \ &
\includegraphics[scale=.725]{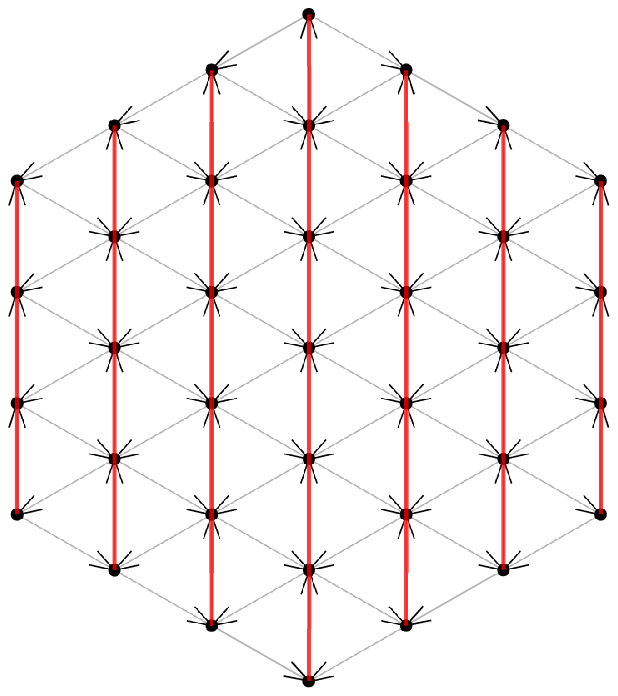}
& \ &
\includegraphics[scale=.725]{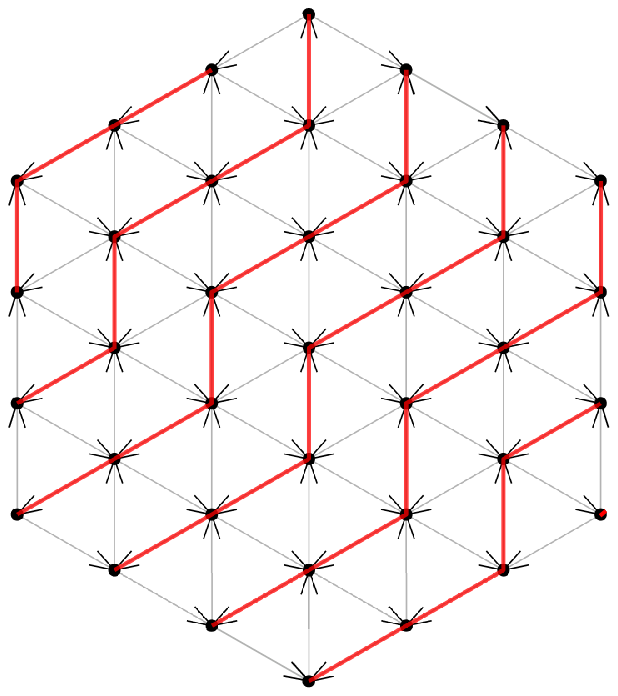}
\end{array}$$
\caption{The three types of the ten simple matchings for the geodesic ghor algebra of Section \ref{folsection}.}
\label{fig:flower-matchings}
\end{figure}

$Q$ has ten simple matchings, three of which are shown in Figure~\ref{fig:flower-matchings}.
Specifically, $Q$ has one simple matching consisting of concentric circles, three simple matchings for the three straight directions of $Q$ ($Q$ is symmetric upon rotations by $\frac{2 \pi}{3}$ and $\frac{4 \pi}{3}$), and six simple matchings that are `wiggly' (and identical up to rotation by a multiple of $\frac{\pi}{3}$).
Using these simple matchings and Lemma \ref{geo}, it is straightforward to verify that $A$ is geodesic.

As shown in Figure \ref{fig:flower}.ii, denote by $\alpha_j$, $j \in [6]$, the cycles that run along the sides of the fundamental polygon $P$; by $\gamma_k$, $k \in [3]$, the `straight' cycles that pass through the center of $P$; and by $\delta_k$, $k \in [3]$, the cycles that run opposite to $\gamma_k$ and also pass through the center of $P$.
We claim that $A$ has cycle algebra
\begin{equation} \label{cycle ex}
S = k[\sigma, \overbar{\alpha}_j, \overbar{\gamma}_k \, | \, j \in [6], \, k \in [3]] \subset k[\mathcal{S}].
\end{equation}
Indeed, consider the set of all vertical paths $p_j,q_j$, $j \in [3]$, shown in Figure \ref{fig:flower}.iii.
The concatenations $q_jp_j$ are cycles in $Q$.
These cycles satisfy
$$\overbar{q_jp_j} = \overbar{\gamma_1\alpha_1}, \ \ \ j \in [3].$$
Furthermore, the cycle $\delta_1$ satisfies
$$\overbar{\delta}_1 = \overbar{\alpha_3 \alpha_4 \alpha_5}.$$
Similar equalities hold for the southeasterly and southwesterly directions.
Thus the ten monomials given in (\ref{cycle ex}) generate $S$.

Observe that $A$ has center
$$R = k[\sigma] + (\overbar{\alpha}_{2j}(\overbar{\alpha}_{2j+1}, \overbar{\alpha}_{2j+2}, \overbar{\alpha}_{2j+3}, \sigma) \, | \, j \in [3])S,$$
with indices taken modulo $6$.
$R$ is nonnoetherian since it contains the infinite ascending chain of ideals
$$\overbar{\alpha}_1\sigma^3R \subset (\overbar{\alpha}_1, \overbar{\alpha}_1^2)\sigma^3R \subset (\overbar{\alpha}_1, \overbar{\alpha}_1^2, \overbar{\alpha}_1^3) \sigma^3R \subset \cdots.$$

To show that $R$ has Krull dimension $4$, it suffices to show that $S$ has Krull dimension $4$, as in Section \ref{octagonsection}. 
For $j \in [6]$ and $k \in [3]$, we have the relations
\begin{equation*} \label{sigma3}
\overbar{\alpha}_j\overbar{\alpha}_{j+3} = \sigma^3 \ \ \ \text{ and } \ \ \ \overbar{\gamma}_k \overbar{\delta}_k = \sigma^6.
\end{equation*}
We also have the three relations
$$\overbar{\gamma}_1 \overbar{\alpha}_4 = \overbar{\alpha}_6 \overbar{\alpha}_2, \ \ \ \ \overbar{\gamma}_2 \overbar{\alpha}_6 = \overbar{\alpha}_2 \overbar{\alpha}_4, \ \ \ \ \overbar{\gamma}_3 \overbar{\alpha}_3 = \overbar{\alpha}_4 \overbar{\alpha}_6,$$
and the six homotopy relations
$$\overbar{\gamma}_1\overbar{\alpha}_5 = \overbar{\alpha}_6 \overbar{\alpha}_1, \ \ \ \ \overbar{\gamma}_1 \overbar{\alpha}_3 = \overbar{\alpha}_2 \overbar{\alpha}_1, \ \ \ \ \ldots, \ \ \ \ \overbar{\gamma}_3 \overbar{\alpha}_3 = \overbar{\alpha}_4 \overbar{\alpha}_5, \ \ \ \ \overbar{\gamma}_3 \overbar{\alpha}_1 = \overbar{\alpha}_6 \overbar{\alpha}_5.$$
Thus, following the arguments of Section \ref{octagonsection}, the chain of ideals of $S$
$$\begin{array}{ccl}
0 & \subset & (\sigma, \overbar{\alpha}_1, \overbar{\alpha}_2, \overbar{\alpha}_3, \overbar{\alpha}_4, \overbar{\alpha}_5, \overbar{\gamma}_2)S =: \mathfrak{p}_1 \\
& \subset & (\mathfrak{p}_1, \overbar{\alpha}_6)S\\
& \subset & (\mathfrak{p}_1, \overbar{\alpha}_6, \overbar{\gamma}_1)S\\
& \subset & (\mathfrak{p}_1, \overbar{\alpha}_6, \overbar{\gamma}_1, \overbar{\gamma}_3)S
\end{array}$$
is a maximal chain of primes.
Therefore $\operatorname{dim}R = \operatorname{dim}S = 4$.

\ \\
\textbf{Acknowledgments.}
The authors were supported by the Austrian Science Fund (FWF) grant P 30549-N26.
The first author was also supported by the Austrian Science Fund (FWF) grant W 1230 and by a Royal Society Wolfson Fellowship.

\bibliographystyle{hep}
\def\cprime{$'$} \def\cprime{$'$}

\end{document}